\newcommand\Ker{\mathop{\rm Ker}}
\newcommand\im{\mathop{\rm Im}}
\newcommand\dist{\mathop{\rm dist}}
\newtheorem{thm}{Theorem}
\newtheorem{defn}{Definition}
\begin{document}

\title{Implicit function and tangent cone theorems for singular inclusions and applications to nonlinear programming
}
%\subtitle{Do you have a subtitle?\\ If so, write it here}

%\titlerunning{Short form of title}        % if too long for running head

\author{Agnieszka Prusi\'nska \and
Ewa Bednarczuk         \and
        Alexey Tret'yakov
}

%\authorrunning{Short form of author list} % if too long for running head

\institute{A. Prusi\'nska \at
               Siedlce University, Faculty of Science, 3 Maja 54, 08-110 Siedlce, Poland\\
               Tel.: +48-25-6431076\\
               \email{aprus@uph.edu.pl}\\
               ORCID ID: 0000-0002-6091-6884
              \and
E. Bednarczuk \at
              System Research Institute, Polish Academy of Sciences, Newelska 6, 01-447 Warsaw, Poland
           \and
              A. \ Tret'yakov \at
              Dorodnitsyn Computing Center, Russian Academy of Sciences, Vavilova 40, 119991 Moscow, Russia\\
}

\date{Received: date / Accepted: date}
% The correct dates will be entered by the editor

\maketitle

\begin{abstract}
The paper is devoted to the implicit function theorem involving singular mappings. We also discuss the form of the tangent cone to the solution set of the generalized equations in singular case and give some examples of applications to nonlinear programming and complementarity problems.
\keywords{Implicit function theorem \and Multifunction \and Tangent cone \and Singularity \and Generalized equation \and $p$-regularity}
% \PACS{PACS code1 \and PACS code2 \and more}
\subclass{46N10 \and 47J07 \and 47J22}
\end{abstract}

\section{Introduction}
\label{intro}
The paper deals with so-called generalized equations, that is inclusions of the form
\begin{equation}\label{eq1}
0\in f(x,y)+Q(y)
\end{equation}
where $f:X\times Y\mapsto Y^{\ast}$, $Y$ -- Banach space, $Y^{\ast}$ dual space to $Y$, $X$ -- normed space and  $Q:Y\rightrightarrows Y^{\ast}$ is a set valued mapping.

Generalized equations provide a useful tool for the analysis of  complementarity problems, first order optimality conditions in mathematical programming, equilibrium problems, and many aspects of nonlinear analysis.
Recently metric regularity and Lipschizian stability of solutions to \eqref{eq1} have been studied in \cite{ArMo10}.
 In \cite{CiDoKrVe16} the generalized equation \eqref{eq1} has been accompanied by differential equation and is called differential generalized equation (DGE). The authors provided characterization of metric regularity of  control system coupled with a differential generalized equation, and characterize metric regularity of (DGE) in the finite dimensional case (see also \cite{Gw07,Gw13,PaSt08} for other results related to existence and stability issues). For generalized equations of the form
 $$0\in x- G(y),$$
 where $G:X\times Y \rightrightarrows Z$ and $X,Y,Z$ -- Banach spaces the inverse mapping theorems and higher order metric regularity have been discussed in \cite{Fr89,Fr90,FrQu12}.

  We investigate the problem \eqref{eq1} with the set valued mapping $Q$ of the form of the normal cone $Q=N_C(y)$, where $C\subset Y$ is a convex set, i.e.
 \begin{equation}\label{eq1a}
 0\in f(x,y)+N_C(y),
 \end{equation}
 see e.g. \cite{KlKu02,RoWe98}. Here $N_C(y): Y\rightrightarrows Y^{\ast}$ is defined as
 \begin{equation}\label{conv}
 N_C(y):=\left\{\begin{array}{lll}
 \{z\in Y^{\ast}: \langle z,c-y\rangle\leq 0,\; \forall c\in C\}, & \hbox{ if } & y\in C\\
 \emptyset, & \hbox{ if } & y\notin C.
 \end{array}
 \right.
 \end{equation}

 In the present paper we prove implicit function theorem for the problem \eqref{eq1a}. In \cite{AvMa12}, an implicit function theorem is provided for the problem $f(x,y)\in A$, where $f:X\times Y\mapsto Z $ is a differentiable mapping with $f'_y$ being onto, $X$ topological space, $Y$, $Z$ -- Banach spaces and $A\subset Z$, see also \cite{Iz14,Rob80}.

The approach to implicit function theorem for generalized equation \eqref{eq1a} we propose differs from the results of the above mentioned papers in that we focus on singular mappings, i.e. we do not assume surjectivity of the derivative of the mapping $f$. Such problems \eqref{eq1a} are called singular inclusion problems.

Let $(x_0,y_0)$ be the solution to the inclusion problem \eqref{eq1a}, i.e.
\begin{equation}\label{eq2}
0\in f(x_0,y_0)+N_C(y_0).
\end{equation}

We use $p$-regularity theory \cite{BrTrMa08} in deriving our implicit function theorem for \eqref{eq1a} which is an efficient tool to deal with differentiable mappings when the first derivative of $f$ at $(x_0,y_0)$ is not surjective (singular, degenerate) and so the strong regularity condition by Robinson (see \cite{Rob80}) is not applicable.

More precisely, we investigate the problem of existence of a locally defined mapping $\varphi : X\rightarrow Y$, $y=\varphi(x)$ which is a solution of \eqref{eq1a} near a given solution $(x_0,y_0)$, that is $0\in f(x,\varphi(x))+N_C(\varphi(x))$ for $x$ close to $x_{0}$ and $y_0=\varphi(x_0)$.

The classical implicit function theorem says that when a continuously differentiable function $f(x,y)$ vanishes at a point $(x_0,y_0)$ with $f'_y(x_0,y_0)$ nonsingular (surjective), the equation $f(x,y)=0$ can be solved for $y$ in terms of $x$ in a neighborhood of $(x_0,y_0)$. This theorem has been extended in various directions, e.g. to Banach spaces \cite{Tr87,TrMa03}, to multivalued mappings \cite{AvMa12,Fr89,Fr90,Iz14,Rob80}, to nonsmooth functions \cite{Iz14,Rob81}, etc.

The results we present can be applied to parametric problems. There are numerous theorems concerning the solution existence of the problem with small parameter. Some of them deal with the problem of solution existence of the equation $f(x,y)=0$,  where the mapping $f$ is singular, e.g. \cite{BrEvTr06,BrEvTre06,PrTr08,PrTr11,Tr10}. This analysis was based on the constructions of $p$-regularity theory that has been developed for the last forty years. The main constructions of this theory are described e.g. in \cite{BrTr03}--\cite{BrTr07} or in \cite{Tr87}--\cite{Tr10}.

The organization of the paper is as follows. In section 2 we formulate our main result which is Theorem \ref{th1} providing conditions for the existence of the implicit function $\varphi$ for the singular inclusion problem \eqref{eq1a}.
In section 3 we prove Lusternik-type theorem for \eqref{eq1a}.

\section{Implicit Function Theorem for singular inclusions}
\label{sec:1}
Let $p\geq 2$ be a natural number and let $B:X^p\rightarrow Y$ be a continuous symmetric $p$-multilinear mapping. By $B[\cdot]^p: X\rightarrow Y$ we mean the $p$-form associated to $B$ and define it as follows
$$B[x]^p:=B(x,x,\ldots,x), \qquad x\in X.$$

For any $h \in Y$ and $f\in \mathcal{C}^{p+1}(X\times Y,Y^{\ast})$ we define a set-valued mapping $L_h: Y\rightrightarrows Y^{\ast}$,
$$L_h(y):=\frac{1}{(p-1)!} f_y^{(p)}(x_0,y_0)[h]^{p-1}[h+y]+N_C(h+y).$$
Recall the Hausdorff distance between any sets $S_1$ and $S_2$,
$$H(S_1,S_2):=\max \left\{\sup_{x\in S_1}\dist(x,S_2),\sup_{y\in S_2}\dist(y,S_1)\right\}.$$

Without loss of generality assume that $x_0= 0$ and $y_0= 0$. Denote $U_{\gamma}(0)$, $V_{\gamma}(0)$ sufficiently small neighborhoods of $0$ in $X$ and $Y$, respectively. Consider completely degenerate case up to the order $p$, that is assume that $f_y^{(k)}(0,0)\equiv 0$ for $k=1,\ldots,p-1.$

We prove the following theorem.
\begin{thm}\label{th1}
  Let $f\in \mathcal{C}^{p+1}(X\times Y,Y^{\ast}).$ Suppose that \eqref{eq2} is satisfied at $(x_0,y_0)=(0,0)$ and
  \begin{equation}\label{eq3}
    f_y^{(k)}(0,0)=0, \quad k=1,2,\ldots, p-1.
    \end{equation}
  Assume the following conditions hold.
    \begin{enumerate}
      \item[$1^{\circ}$] Banach condition:

\vskip3pt
\noindent
      for any $x\in U_{\gamma}(0)$, such that $f(x,0)\neq 0$ and $\gamma>0$ is sufficiently small, there exists $h(x)\in Y$, $h\neq 0$ such that
\begin{equation}\label{eq4}
  -f(x,0)\in \frac{1}{(p-1)!}f_y^{(p)}(0,0)[h(x)]^p+N_C(h(x)),
\end{equation}
and
$\|h(x)\|\leq c\cdot \|f(x,0)\|^{1/p}$, where $c>0$ is independent constant,

\vskip6pt

      \item[$2^{\circ}$] Strong $p$-regularity condition at the point $0$ along $h=h(x)$, $h\in Y$, i.e.

\vskip3pt
      \begin{equation}\label{eq5}
        H(L_h^{-1}(z_1),L_h^{-1}(z_2))\leq \frac{c}{\|h\|^{p-1}}\|z_1-z_2\|, \quad \forall z_1, z_2 \in Y^{\ast},
      \end{equation}

\vskip6pt
      \item[$3^{\circ}$] $p$-factor approximation condition, i.e.

      \vskip3pt
      \noindent
      \begin{eqnarray}\label{eq6}
      % \nonumber % Remove numbering (before each equation)
        \nonumber \left\|f(x,y_1)-f(x,y_2)-\frac{1}{p!}f_y^{(p)}(0,0)[y_1]^p+\frac{1}{p!}f_y^{(p)}(0,0)[y_2]^p\right\| &\leq&  \\
        \leq \delta \left(\|y_1\|^{p-1}+\|y_2\|^{p-1}\right)\|y_1-y_2\|& &
      \end{eqnarray}
         for $x\in U_{\gamma}(0),$ $y_1$, $y_2\in V_{\gamma}(0)$ and $\delta>0$  sufficiently small.
    \end{enumerate}
  Then for sufficiently small $\varepsilon>0$ there exist a neighborhood $U_{\varepsilon}(0)\subset X$ and  a mapping $\varphi(x): U_{\varepsilon}(0)\rightarrow Y$ such that for any $x\in U_{\varepsilon}(0)$ the mapping $\varphi(x)$ is a solution of the inclusion \eqref{eq1a}, i.e.
  \begin{equation}\label{eq7}
    0\in f(x,\varphi(x))+N_C(\varphi(x)),
  \end{equation}
  and
  \begin{equation}\label{eq7'}
    \|\varphi(x)\|\leq m\cdot \|f(x,0)\|^{1/p},
  \end{equation}
  where $m>0$ is independent constant.
\end{thm}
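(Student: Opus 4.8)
The plan is to recast the inclusion \eqref{eq7} as a set-valued fixed-point problem driven by the $p$-factor operator, and to solve it by a modified-Newton (contraction) argument in which condition $1^{\circ}$ supplies the initial approximation, condition $2^{\circ}$ inverts the linearized inclusion, and condition $3^{\circ}$ controls the remainder. First I would fix $x\in U_{\varepsilon}(0)$: if $f(x,0)=0$ then $0\in N_C(0)$ and $\varphi(x)=0$ already satisfies \eqref{eq7} and \eqref{eq7'}, so I may assume $f(x,0)\neq 0$ and let $h=h(x)$ be furnished by $1^{\circ}$, with $\|h\|\le c\,\|f(x,0)\|^{1/p}$. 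Writing $\Lambda_h(\cdot):=\tfrac{1}{(p-1)!}f_y^{(p)}(0,0)[h]^{p-1}[\cdot]$ for the linear $p$-factor operator and $M_h(w):=\Lambda_h(w)+N_C(w)$, one has $L_h(y)=M_h(h+y)$, hence $L_h^{-1}(z)=M_h^{-1}(z)-h$, so that $2^{\circ}$ transfers verbatim to $M_h^{-1}$ and gives $H(M_h^{-1}(z_1),M_h^{-1}(z_2))\le \tfrac{c}{\|h\|^{p-1}}\|z_1-z_2\|$. With the higher-order remainder $g(x,y):=f(x,y)-f(x,0)-\tfrac{1}{p!}f_y^{(p)}(0,0)[y]^p$ I would define
\begin{equation*}
\Psi(y):=M_h^{-1}\!\Big(\Lambda_h(y)-f(x,0)-\tfrac{1}{p!}f_y^{(p)}(0,0)[y]^p-g(x,y)\Big).
\end{equation*}
A direct computation shows that $y\in\Psi(y)$ is equivalent to $-f(x,y)\in N_C(y)$, i.e.\ to \eqref{eq7}; so it suffices to produce a fixed point of $\Psi$.

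Next I would verify the hypotheses of a fixed-point theorem for set-valued contractions on a closed ball centred at $h$. For the contraction estimate I would apply the Lipschitz bound for $M_h^{-1}$ and split the argument of $\Psi$ into the \emph{$p$-factor discrepancy} $\Lambda_h(y_1-y_2)-\tfrac{1}{p!}\big(f_y^{(p)}(0,0)[y_1]^p-f_y^{(p)}(0,0)[y_2]^p\big)$ and the remainder difference $g(x,y_1)-g(x,y_2)$. The latter is bounded by $\delta(\|y_1\|^{p-1}+\|y_2\|^{p-1})\|y_1-y_2\|$ via $3^{\circ}$; the former, by multilinearity and a mean-value argument, is of order $\|f_y^{(p)}(0,0)\|\,\|h\|^{p-2}\max_i\|y_i-h\|\,\|y_1-y_2\|$. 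On the ball $\{\|y-h\|\le q\|h\|\}$ both contribute a factor $\|h\|^{p-1}$, so after multiplying by $\tfrac{c}{\|h\|^{p-1}}$ one obtains $H(\Psi(y_1),\Psi(y_2))\le\theta\,\|y_1-y_2\|$ with $\theta=C(q+\delta)$. For the range (self-mapping) condition I would use $h\in M_h^{-1}(-f(x,0))$ from $1^{\circ}$ together with the identity $\Lambda_h(h)-f(x,h)=\tfrac{p-1}{p!}f_y^{(p)}(0,0)[h]^p-g(x,h)$, which yields
\begin{equation*}
\dist\big(h,\Psi(h)\big)\le c\Big(\tfrac{p-1}{p!}\,\|f_y^{(p)}(0,0)\|+\delta\Big)\|h\|.
\end{equation*}
Once $q$ and $\delta$ are chosen so that $\theta<1$ and $\dist(h,\Psi(h))\le(1-\theta)q\|h\|$, the map $\Psi$ sends the ball into itself and Nadler's theorem yields $y^{*}=\varphi(x)\in\Psi(y^{*})$. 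Summing the geometric series of the increments from the seed (starting from $y_0=0$, whose first image contains $h$) gives $\|\varphi(x)\|\le \tfrac{\|h\|}{1-\theta}\le \tfrac{c}{1-\theta}\|f(x,0)\|^{1/p}$, which is \eqref{eq7'} with $m:=c/(1-\theta)$.

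The \emph{main obstacle} is the tension between these two requirements. The Banach seed $h$ fixes the scale of the correction region at order $\|h\|$, since the residual of $h$ against the exact $p$-truncation is $\tfrac{p-1}{p!}f_y^{(p)}(0,0)[h]^p$, genuinely of order $\|h\|^{p}$; yet the modified-Newton map freezes the operator $\Lambda_h$, whose chord error over a region of radius $q\|h\|$ is of order $q\,\|f_y^{(p)}(0,0)\|\,\|h\|^{p-1}$. Thus the range radius and the contraction modulus pull in opposite directions, and closing the argument requires a precise quantitative balance in which $\theta=C(q+\delta)<1$ holds simultaneously with the self-mapping inequality. I would secure this by localizing ($\gamma,\varepsilon$ small, so that all iterates remain in $V_{\gamma}(0)$ where $3^{\circ}$ is valid and $\|h\|$ is small), choosing $\delta$ sufficiently small as permitted by $3^{\circ}$, and tracking the explicit leading constants — the factor $\tfrac{p-1}{p!}$ and the binomial coefficients arising from expanding $[h+\eta]^{p-1}$ — so that the constant $c$ shared by $1^{\circ}$ and $2^{\circ}$ enters compatibly. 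Verifying this balance, rather than any single estimate, is the crux of the proof.
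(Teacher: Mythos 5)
Your reduction is structurally the same as the paper's: your $\Psi(y)=M_h^{-1}\bigl(\Lambda_h(y)-f(x,y)\bigr)$ is exactly the paper's mapping $\Phi(x,y)=L_h^{-1}(r(x,h+y))$ conjugated by the translation $y\mapsto h+y$, condition $1^{\circ}$ supplies the seed, $2^{\circ}$ the Lipschitz inverse, $3^{\circ}$ the contraction, and Nadler's theorem plays the role of the Ioffe--Tikhomirov contraction multimapping principle (CMP). The gap sits precisely at the point you yourself call the crux, and it is not a matter of bookkeeping: the two requirements $\theta=C(q+\delta)<1$ and $\dist(h,\Psi(h))\le(1-\theta)q\|h\|$ are incompatible in general, because your (correct) displacement bound carries the order-one constant $c\,\tfrac{p-1}{p!}\|f_y^{(p)}(0,0)\|$, which no choice of $q$, $\delta$, or neighborhood can shrink. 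Concretely, take $p=2$, $X=Y=\mathbb{R}$, $C=\mathbb{R}$ (so $N_C\equiv\{0\}$), $f(x,y)=\tfrac12 y^2-x^2$. All hypotheses hold with $c=1$ and every $\delta>0$ (the left side of \eqref{eq6} vanishes identically); the seed of \eqref{eq4} is $h=|x|$, one computes $\Psi(y)=y+h-\tfrac{y^2}{2h}$, the Lipschitz constant of $\Psi$ on $\{|y-h|\le qh\}$ is exactly $q$, and $\dist(h,\Psi(h))=\tfrac12 h$. Your balance would require $\tfrac12\le(1-q)q\le\tfrac14$, which is impossible; the contraction scheme run from this seed fails for every $q$, even though the conclusion of the theorem is true there (the solution is $\varphi(x)=\sqrt2\,h$, at distance $(\sqrt2-1)\|h\|$ from the seed).

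What makes the paper's proof close is that its initial-displacement estimate is $H(\Phi(x,0),\{0\})\le c\delta\|h\|$, i.e.\ $o(\|h\|)$ with the \emph{small} constant $\delta$ of $3^{\circ}$, not $c\bigl(\tfrac{p-1}{p!}\|f_y^{(p)}(0,0)\|+\delta\bigr)\|h\|$; with that bound CMP applies on a ball of radius $O(\delta)\|h\|$, the chord error then also contributes only $O(\delta)$ to $\theta$, and the fixed point obeys $\|y(x)\|=o(\|h\|)$, giving \eqref{eq7'}. Your honest computation shows this small displacement does \emph{not} follow from \eqref{eq4} as written: the residual $r(x,h)+f(x,0)$ contains $\tfrac{p-1}{p!}f_y^{(p)}(0,0)[h]^p$ — exactly the term you isolated — while $3^{\circ}$ controls only the part with coefficient $\tfrac1{p!}$ (indeed, in the scalar example above the paper's claimed bound $c\delta\|h\|$ is false: the displacement is $\tfrac12 h$ for every $\delta$). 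The missing idea is therefore a corrected seed, not a finer balance: the Banach condition must match the Taylor coefficient, $-f(x,0)\in\tfrac1{p!}f_y^{(p)}(0,0)[h]^p+N_C(h)$, which gives $0\in L_h^{-1}\bigl(-f(x,0)+\tfrac{p-1}{p!}f_y^{(p)}(0,0)[h]^p\bigr)$ and hence, via $3^{\circ}$ with $y_1=h$, $y_2=0$, a displacement $\le c\delta\|h\|$. In the example this replaces $h=|x|$ by $h=\sqrt2|x|$, which is a fixed point of $\Psi$, and the argument closes. As long as you keep the seed with coefficient $\tfrac1{(p-1)!}$ and a displacement of genuine order $\|h\|$, your proof cannot be completed.
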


\noindent
{\bf Remark.}
In the case when $f(x,0)=0$ we can take the mapping $\varphi(x)=0$ and Banach condition is trivial.

\vskip6pt

Before we prove this theorem we give two examples and for the convenience of the Reader we recall Robinson's strong regularity condition \cite{Rob80}.
\begin{defn}\label{rob}
	Let $f\in \mathcal{C}^1(X\times Y,Y^{\ast})$ and $(x_0,y_0)$ be a solution of \eqref{eq1a} and
	$$Ty:=f(x_0,y_0)+f'_y(x_0,y_0)(y-y_0)+N_C(y).$$
	We say that \eqref{eq1a} is strongly regular at $(x_0,y_0)$ with associated Lipschitz constant $\lambda$ if there exist neighborhoods $U$ of the origin in $Y^{\ast}$ and $V$ of $y_0$ such that the restriction to $U$ of $T^{-1}\cap V$ is a single valued function from $U$ to $V$ which is Lipschitzian on $U$ with modulus $\lambda$.
\end{defn}

\noindent
\textbf{Example 1.}

Consider the nonlinear complementarity problem: to solve the system
\begin{equation} \label{eq8}
\begin{split}
f_1(x,y) & = y_1^2-y_2^2-x_1\geq 0 \\
 f_2(x,y) & = y_1\cdot y_2-x_2 \geq 0\\
 y\geq 0,&\quad \langle f(x,y),y \rangle=0
\end{split}
\end{equation}
where
$$x=(x_1,x_2)^T\in \mathbb{R}^2, \; y=(y_1,y_2)^T\in \mathbb{R}_+^2, \;  f(x,y)=(f_1(x,y),f_2(x,y))^T, $$
$f:\mathbb{R}^2\times \mathbb{R}_+^2 \to \mathbb{R}^2$
and $x$ represents a small perturbation parameter. This problem is equivalent to the following generalized equation (see \cite{Rob80})
\begin{equation}\label{eq9}
  0\in f(x,y)+N_{\mathbb{R}_+^2}(y)
\end{equation}
and to analyze nonlinear complementarity problem \eqref{eq8} we can investigate inclusion \eqref{eq9} and apply Theorem \ref{th1}. It is obvious that the strong regularity condition (see Definition \ref{rob}) fails at $(0,0)^T$ (since $f'_y(0,0)\equiv 0$) and $T^{-1}$ is multivalued mapping, where $Ty:=f'_y(0,0)y+N_{\mathbb{R}_+^2}(y).$

On the other hand, it turns out that all assumptions of Theorem \ref{th1} are fulfilled for $p=2$.

In this example the Banach condition $1^{\circ}$ takes the form $$-f(x,0)\in f_y''(0,0)[h(x)]^2+N_{\mathbb{R}_+^2}((h_1,h_2)),$$ that is
\begin{equation}\label{eq10}
  -\left(
     \begin{array}{c}
       x_1 \\
       x_2 \\
     \end{array}
   \right)\in \left(
                \begin{array}{c}
                  2h_1^2-2h_2^2 \\
                  2h_1 h_2 \\
                \end{array}
              \right)+N_{\mathbb{R}_+^2}((h_1,h_2))
\end{equation}
holds since $h_1\neq 0$ and $h_2\neq 0$ and hence $N_{\mathbb{R}_+^2}(h)= \{0\}$. This yields that the solution $h(x)$ of \eqref{eq10} can be found as a solution of the following equation
\begin{equation*}
  -\left(
     \begin{array}{c}
       x_1 \\
       x_2 \\
     \end{array}
   \right)= \left(
                \begin{array}{c}
                  2h_1^2-2h_2^2 \\
                  2h_1 h_2 \\
                \end{array}
              \right)
\end{equation*}
and we obtain $\|h(x)\|\leq c \|f(x,0)\|^{1/2}$, $c>0$.

The second assumption, strong $p$-regularity $2^{\circ}$ holds since $f(x,y)$ is $2$-regular (see e.g. in \cite{BrTr07}) at $(0,0)^T$ with respect to $y$ along any $h\in Y$, such that $h_1\neq 0$ and $h_2\neq 0$, that is $\im f''(0,0)h=\mathbb{R}^2$ and $N_{\mathbb{R}_+^2}(\cdot)=\{0\}$.

The $p$-factor approximation condition $3^{\circ}$ is immediately satisfied due to the form of the mapping $f(x,y)$. This means that all conditions of theorem \ref{th1} are fulfilled and therefore for small $x$ there exists a mapping $\varphi(x)$ which acts from a neighborhood of $0\in \mathbb{R}^2$ into $\mathbb{R}^2$ and for which \eqref{eq7'} is satisfied.

\vskip6pt

In the next example we illustrate the application of Theorem \ref{th1} to the standard nonlinear programming problem
\begin{equation} \label{eq11}
\begin{split}
 & \min \xi(x) \\
 \hbox{subject to } & g(y)\leq 0,
 \end{split}
\end{equation}
where $g=(g_1,\ldots,g_m)^T$, $\mathcal{L}(y,\lambda)=\xi(y)+\langle \lambda,g(y)\rangle,$ $\lambda=(\lambda_1,\ldots,\lambda_m)^T$. The Karush-Kuhn-Tucker optimality conditions (KKT) for \eqref{eq11} are
\begin{equation}\label{eq12}
  \mathcal{L}'_y=0, \; g(y)\leq 0, \; \lambda\geq 0, \; \langle\lambda, g(y)\rangle=0.
\end{equation}
This conditions can be written as the generalized equation
\begin{equation}\label{eq13}
  0\in \left(
     \begin{array}{c}
       \mathcal{L}'_y(y,\lambda) \\
       -g(y) \\
     \end{array}
   \right)+ N_{\mathbb{R}^n\times\mathbb{R}_+^m}\left(
                \begin{array}{c}
                  y \\
                  \lambda \\
                \end{array}
              \right)
\end{equation}
and we verify the assumptions of the Theorem \ref{th1}  at the solution $(y_0,\lambda_0)$ for \eqref{eq13}.

\vskip6pt

\noindent
\textbf{Example 2.}

Consider the following nonlinear programming problem
\begin{equation} \label{eq14}
\begin{split}
 & \min y_1^4-y_2^4-xy_1 \\
 \hbox{subject to } & \\
 & y_1^3-2y_2^3\leq 0\\
 & y_1^3+2y_2^3\leq 0
 \end{split}
\end{equation}
where $x$ represents a small perturbation parameter. For $x_0=0$ a solution of \eqref{eq14} is $y_0=0$ and
$\mathcal{L}(y,\lambda,x)=y_1^4-y_2^4-xy_1+\lambda_1(y_1^3-2y_2^3)+\lambda_2 (y_1^3+2y_2^3)$.

Then Karush-Kuhn-Tucker optimality conditions are as follows
$$4y_1^3-x+3\lambda_1y_1^3+3\lambda_2y_1^2=0$$
$$-4y_2^3-6\lambda_1  y_2^2+6\lambda_2y_2^2=0$$
$$y_1^3-2y_2^3\leq 0, \quad y_1^3+2y_2^3\leq 0, \quad \langle\lambda,g(y)\rangle=0$$
where
$$\lambda=(\lambda_1,\lambda_2)^T, \; y=(y_1,y_2)^T, \; g(y)=(g_1(y),g_2(y))^T,$$
$$g_1(y)=y_1^3-2y_2^3, \; g_2(y)=-y_1^3+2y_2^3.$$
Consider the case $\lambda_0=(0,0)^T$.
Generalized equation \eqref{eq1a} for the problem \eqref{eq14} is
\begin{equation}\label{eq15}
  0\in \left(
     \begin{array}{c}
       4y_1^3-x+3\lambda_1y_1^2+3\lambda_2y_1^2 \\
       -4 y_2^3-6\lambda_1y_2^2+6\lambda_2y_2^2\\
       y_1^3-2y_2^3\\
       y_1^3+2y_2^3\\
     \end{array}
   \right)+ N_{\mathbb{R}^2\times\mathbb{R}_+^2}\left(
                \begin{array}{c}
                  y_1 \\
                  y_2\\
                  \lambda_1 \\
                  \lambda_2\\
                \end{array}
              \right).
\end{equation}
It is obvious that \eqref{eq15} is not strong regular in the sense by Robinson at the solution point $y_0=(0,0)^T,$ $\lambda_0=(0,0)^T,$ $x_0=0$. However, one can verify that all assumptions of Theorem \ref{th1} are fulfilled for $p=3$ with $h=(h_{y_1},h_{y_2},0,h_{\lambda_2})$, since in this case we can take $h_{y_1}\neq 0$, $h_{y_2}\neq 0$, $h_{\lambda_2}\neq 0$ for $x\neq 0$ and normal cone operator $N_{\mathbb{R}^2\times\mathbb{R}_+^2}\left(
                \begin{array}{c}
                  y_1 \\
                  y_2\\
                  \lambda_1 \\
                  \lambda_2\\
                \end{array}
              \right)$ has the following form $\left(
                \begin{array}{c}
                 0 \\
                  0\\
                  -\alpha \\
                  0\\
                \end{array}
              \right)$, $\alpha\in\mathbb{R}_+$.

 This means that for small perturbation $x$ there exists a mapping $\varphi(x)=(y(x),\lambda(x))^T$ such that inclusion \eqref{eq15} (or \eqref{eq13}) holds and hence KKT conditons \eqref{eq12} holds as well and the estimation for $\varphi(x)$ is as follows
$$\|\varphi(x)\|=\|y(x)\|+\|\lambda(x)\|\leq m\cdot \|\mathcal{L}_y'(0,0,x)\|^{1/3}\leq \bar{m} \|x\|^{1/3}$$
where $m>0$, $\bar{m}>0$ are independent constants.

\vskip6pt

The following theorem is essential in the proof of Theorem \ref{th1} (see \cite{IoTih74}).
\begin{thm}[Contraction multimapping principle, CMP]
	Let $Z$ be a complete metric space with distance $\rho$ and $z_0\in Z$. Assume
	that we are given a multimapping
	\[\Phi : U_{\varepsilon}(z_{0})\rightrightarrows Z,\] on a ball
	\(U_{\varepsilon}(z_{0})=\left\{z:
	\rho(z,z_{0})<\varepsilon\right\} \;\;(\varepsilon>0)\) where the
	sets $\Phi(z)$ are non-empty and closed for any \(z\in
	U_{\varepsilon}(z_{0}).\) Further, assume that there exists a
	number $\theta,\; 0<\theta<1$ such that
	\begin{enumerate}
		\item[1)] $H(\Phi(z_{1}),\Phi(z_{2}))\leq \theta
		\rho(z_{1},z_{2})$ for any $z_{1},z_{2}\in U_{\varepsilon}(z_{0})$
		\item[2)] $\rho(z_{0},\Phi(z_{0}))<(1-\theta) \varepsilon.$
	\end{enumerate}
	Then, for every number $\varepsilon_{1}$ which satisfies the
	inequality
	\[\rho(z_{0},\Phi(z_{0}))<\varepsilon_{1}<(1-\theta)\varepsilon,\]
	there exists \(z\in
	B_{\varepsilon_{1}/(1-\theta)}(z_{0})=\left\{\omega:
	\rho(\omega,z_{0})\leq\varepsilon_{1}/(1-\theta)\right\}\) such
	that
	$$
	z\in \Phi (z).
	$$
\end{thm}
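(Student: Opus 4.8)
The plan is to prove this Nadler-type fixed point theorem for set-valued contractions by the method of successive approximations: I would build a sequence of iterates $\{z_n\}$ with $z_{n+1}\in\Phi(z_n)$ whose consecutive steps decay geometrically at rate $\theta$, show it is Cauchy, and pass to the limit. Fix an arbitrary $\varepsilon_1$ with $\rho(z_0,\Phi(z_0))<\varepsilon_1<(1-\theta)\varepsilon$. Since $\dist(z_0,\Phi(z_0))=\rho(z_0,\Phi(z_0))<\varepsilon_1$ and $\dist$ is an infimum, I would first select $z_1\in\Phi(z_0)$ with $\rho(z_0,z_1)<\varepsilon_1$.

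First I would set up the inductive step. Suppose $z_n\in\Phi(z_{n-1})$ has been chosen with the \emph{strict} bound $\rho(z_{n-1},z_n)<\varepsilon_1\theta^{n-1}$. The definition of the Hausdorff distance gives $\dist(z_n,\Phi(z_n))\le H(\Phi(z_{n-1}),\Phi(z_n))$, because $z_n\in\Phi(z_{n-1})$; combined with hypothesis 1) this yields $\dist(z_n,\Phi(z_n))\le\theta\,\rho(z_{n-1},z_n)<\varepsilon_1\theta^{n}$. Since this bound is strict and $\dist$ is an infimum, there exists $z_{n+1}\in\Phi(z_n)$ with $\rho(z_n,z_{n+1})<\varepsilon_1\theta^{n}$, which closes the induction. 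A telescoping estimate then gives $\rho(z_n,z_0)\le\sum_{k=1}^{n}\rho(z_k,z_{k-1})<\sum_{k=1}^{\infty}\varepsilon_1\theta^{k-1}=\varepsilon_1/(1-\theta)<\varepsilon$, so every iterate stays inside $U_{\varepsilon}(z_0)$ and the contraction hypothesis is legitimately applicable at each step.

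Next I would verify that $\{z_n\}$ is Cauchy: for $m>n$ the same geometric bound gives $\rho(z_m,z_n)<\varepsilon_1\theta^{n}/(1-\theta)\to 0$, so by completeness of $Z$ the sequence converges to some $z$, and passing to the limit in $\rho(z_n,z_0)\le\varepsilon_1/(1-\theta)$ places $z$ in the asserted ball $B_{\varepsilon_1/(1-\theta)}(z_0)$; in particular $z\in U_{\varepsilon}(z_0)$, so $\Phi(z)$ is non-empty and closed. To conclude $z\in\Phi(z)$ I would estimate $\dist(z,\Phi(z))\le\rho(z,z_{n+1})+\dist(z_{n+1},\Phi(z))$ and use $z_{n+1}\in\Phi(z_n)$ together with hypothesis 1) to obtain $\dist(z_{n+1},\Phi(z))\le\theta\,\rho(z_n,z)$; letting $n\to\infty$ forces $\dist(z,\Phi(z))=0$, and closedness of $\Phi(z)$ then gives $z\in\Phi(z)$.

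The main obstacle is that in a general complete metric space a closed set need not be proximinal, so the infimum defining $\dist(z_n,\Phi(z_n))$ may fail to be attained and one cannot simply select a nearest point of $\Phi(z_n)$. The device that resolves this is to carry strict inequalities through the entire induction: because $\dist(z_n,\Phi(z_n))<\varepsilon_1\theta^{n}$ holds \emph{strictly}, a point of $\Phi(z_n)$ lying within the open bound $\varepsilon_1\theta^{n}$ must exist, which simultaneously preserves the geometric decay rate $\theta$ and keeps all iterates—and hence the limit—inside the prescribed ball, yielding the quantitative localization claimed in the conclusion.
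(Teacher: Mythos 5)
Your proof is correct; the paper states the CMP without proof, citing \cite{IoTih74}, so there is no in-paper argument to compare against. Your successive-approximation scheme --- carrying strict inequalities through the induction so that points of $\Phi(z_n)$ within the bound $\varepsilon_1\theta^n$ can be selected without any proximinality assumption, and using the telescoped bound $\rho(z_n,z_0)<\varepsilon_1/(1-\theta)<\varepsilon$ to keep every iterate (and the limit) inside $U_{\varepsilon}(z_0)$ where the contraction hypothesis applies --- is precisely the classical Nadler-type argument given in that reference, including the final step $\dist(z,\Phi(z))\leq\rho(z,z_{n+1})+\theta\,\rho(z_n,z)\to 0$ combined with closedness of $\Phi(z)$.
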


\begin{proof}(of Theorem 1)

\noindent
Suppose that $x\in U_{\gamma}(0)$, $y_1,y_2\in V_{\gamma}(0)$, for sufficiently small $\gamma>0$. Moreover, assume that there exists $h(x)\in Y$, $h\neq 0$ such that Banach condition holds true.

Let us define a mapping $r: U_{\gamma}(0)\times Y\rightarrow W_{\varepsilon}(0)$, where $\varepsilon> 0$ sufficiently small and $W_{\varepsilon}(0) \subset Y^{\ast}$, as follows
  \begin{equation}\label{eq16}
  r(x,h+y):=\frac{1}{(p-1)!}f_y^{(p)}(0,0)[h]^{p-1}[h+y]-f(x,h+y).
  \end{equation}
  Now, for $y=0$
  \begin{equation}\label{eq17}
    L_h(0)=\frac{1}{(p-1)!}f_y^{(p)}(0,0)[h]^{p}+N_C(h).
  \end{equation}
  Then, using a right inverse of $L_h$ we obtain
  \begin{equation}\label{eq18}
    0\in L_h^{-1}\left(\frac{1}{(p-1)!}f_y^{(p)}(0,0)[h]^{p}+N_C(h)\right).
  \end{equation}
  Introduce an auxiliary mapping $\Phi: U_{\gamma}(0)\times V_{\gamma}(0)\rightarrow Y$,
  \begin{equation}\label{eq19}
    \Phi(x,y):=L_h^{-1}(r(x,h+y)).
  \end{equation}
  We show that there exists $y=y(x)\in \Phi(x, y(x))$ or, in other words,\\ $0\in N_C(h+y(x))+f(x,h+y(x)).$
  For this purpose we check the assumptions of CMP.

  From the assumptions 2$^{\circ}$ and 3$^{\circ}$ of Theorem \ref{th1} we obtain
  \begin{enumerate}
    \item[1)] $H(\Phi(x, y_1), \Phi(x,y_2))\leq \!\!\!\!\!\!\!^{^{2^{\circ}}} \frac{c}{\|h\|^{p-1}}\|r(x,h+y_1)-r(x,h+y_2)\|=\frac{c}{\|h\|^{p-1}}\cdot$

        \vskip4pt

        $\cdot\|\frac{1}{(p-1)!}f_y^{(p)}(0,0)[h]^{p-1}[y_1-y_2]-f(x,h+y_1)+f(x,h+y_2)\|\leq$

\vskip4pt

    $\leq \!\!\!\!\!\!^{^{3^{\circ}}}\frac{c\cdot \delta(\|h\|^{p-1}+\|h\|^{p-1})}{\|h\|^{p-1}}\|y_1-y_2\|\leq 2 c\cdot \delta \|y_1-y_2\|$.

\vskip4pt

    \hskip-0.5cm
    Moreover, since $ 0\in L_h^{-1}\left(-f(x,0)\right)$ then

    \vskip4pt

    \item[2)] $H(\Phi(x,0), 0)\leq H\left(L_h^{-1}(r(x,h)),L_h^{-1}(-f(x,0))\right)\leq$

\vskip4pt

        $\leq \!\!\!\!\!\!^{^{2^{\circ}}} \frac{c}{\|h\|^{p-1}}\|r(x,h)+f(x,0)\|=$

\vskip4pt

        $=\frac{c}{\|h\|^{p-1}}\left\|\frac{1}{(p-1)!}f_y^{(p)}(0,0)[h]^{p}-f(x,h)+f(x,0)\right\|\leq$

\vskip4pt

        $\leq \!\!\!\!\!\!^{^{3^{\circ}}} \frac{c\cdot\delta}{\|h\|^{p-1}}\|h\|^{p-1}\|h\|\leq c \delta\|h\|$.
  \end{enumerate}

\noindent
It means that all assumptions of CMP hold.
Hence there exists $y=y(x)$ such that $y(x)\in L^{-1}_h(r(x,h+y(x)))$, or in other words $$ 0\in f(x,h+y(x))+N_C(h+y(x)).$$

Let $\varphi(x):=h+y(x)$. We finish the proof of the Theorem \ref{th1}  by obtaining the following estimation
$$\|y(x)\|=o\left(\|f(x,0)\|^{1/p}\right)$$
and hence
$$\|\varphi(x)\|\leq m\|f(x,0)\|^{1/p},$$
where $m>0$ independent constant.
\end{proof}

\section{$P$-order tangent cone theorem for singular inclusions. Generalization of Lusternik theorem}

Consider the following mapping
\begin{equation}\label{eq20}
  F(x):=f(x)+N_C(x)
\end{equation}
and the generalized equation
\begin{equation}\label{eq21}
 0\in  F(x)
\end{equation}
where $f:X\rightarrow X^{\ast}$ and sufficiently smooth, $X$ -- Banach space, $C$ is a nonempty closed convex set in $X$ and $N_C(x)$ is defined in \eqref{conv}. Let $x_0$ be the solution to inclusion \eqref{eq21}, i.e. $0\in  F(x_0)$ and introduce the tangent cone to the set
$$M_F(x_0):=\left\{z\in X: 0\in f(z)+N_C(z)\right\}$$
at the point $x_0$.
\begin{defn}
We say that $h$ belongs to the tangent cone $TM_F(x_0)$ of the set $M_F(x_0)$ at the point $x_0$ if $\forall t\in [0,\varepsilon),$ where $\varepsilon>0$ sufficiently small, there exists a mapping $w:X\rightarrow X$ such that
$$0\in f(x_0+th+w(th))+N_C(x_0+th+w(th))$$
and $\|w(th)\|=o(t)$.
\end{defn}
It is enough to consider the completely degenerate case up to the order $p$, i.e. the case where $f^{(k)}(x_0)=0,$ $k=1,\ldots,p-1$, $p\geq 2$.

For any $h\in X$ we define a set-valued mapping $L_h:X\rightrightarrows X^{\ast},$
\begin{equation}\label{eq22}
  L_h(x):=\frac{1}{(p-1)!} f^{(p)}(x_0)[h]^{p-1}[h+x]+N_C(x_0+h+x).
\end{equation}
We can describe the tangent cone $TM_F(x_0)$ by means of the following theorem, which generalizes Lusternik theorem for singular inclusions.
\begin{thm}\label{th3}
Let $f:X\rightarrow X^{\ast}$, $f\in \mathcal{C}^{p+1}(X)$, $f$ be completely degenerate at $x_0$ up to the order $p$ and $0\in F(x_0)$. Assume moreover, that $\bar{h}\in \Ker^{p}L_h(0)$, i.e.
$$0\in L_{t\bar{h}}(0)\Leftrightarrow 0\in  f^{(p)}(x_0)[t\bar{h}]^{p}+N_C(x_0+t\bar{h}) \quad \forall t\in [0,\varepsilon)$$
where $\varepsilon>0$ sufficiently small and for the mapping $F(x)$ strong $p$-regularity condition %by Hausdorff
holds along $\bar{h}$ at the point $x_0$, that is
\begin{equation}\label{eq23}
  H\left(L^{-1}_{t\bar{h}}(y_1),L^{-1}_{t\bar{h}}(y_2)\right)\leq \frac{c}{t^{p-1}}\|y_1-y_2\|
\end{equation}
where $y_1,y_2\in  X^{\ast}$ and $t\in [0,\varepsilon)$.

Then
\begin{equation}\label{eq24}
  \bar{h}\in TM_F(x_0).
\end{equation}
\end{thm}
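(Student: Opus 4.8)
The plan is to reduce \eqref{eq24} to a fixed-point problem for a set-valued map and to apply the Contraction Multimapping Principle (CMP), following the proof of Theorem~\ref{th1} almost verbatim, but with the fixed direction $h$ replaced by the scaled direction $t\bar h$ and with the Banach condition $1^{\circ}$ replaced by the kernel hypothesis $\bar h\in\Ker^{p}L_h(0)$. Fix $t\in[0,\varepsilon)$ and write $h=t\bar h$. Mirroring \eqref{eq16}, I would introduce the residual map
$$r(t,t\bar h+x):=\frac{1}{(p-1)!}f^{(p)}(x_0)[t\bar h]^{p-1}[t\bar h+x]-f(x_0+t\bar h+x)$$
and, as in \eqref{eq19}, the auxiliary multimap $\Phi_t(x):=L_{t\bar h}^{-1}(r(t,t\bar h+x))$. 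The algebraic identity is the same as in Theorem~\ref{th1}: if $x\in\Phi_t(x)$, then $r(t,t\bar h+x)\in L_{t\bar h}(x)$ and the two copies of $\frac{1}{(p-1)!}f^{(p)}(x_0)[t\bar h]^{p-1}[t\bar h+x]$ cancel, leaving $0\in f(x_0+t\bar h+x)+N_C(x_0+t\bar h+x)$. Setting $w(t\bar h):=x$ is then exactly the defining relation of $TM_F(x_0)$, so it remains to produce such a fixed point with $\|w(t\bar h)\|=o(t)$.

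Next I would verify the CMP hypotheses on a ball of radius $\varepsilon_1=\varepsilon_1(t)$. The contraction estimate is obtained as in step~1) of the proof of Theorem~\ref{th1}: strong $p$-regularity \eqref{eq23} gives $H(\Phi_t(x_1),\Phi_t(x_2))\leq \frac{c}{t^{p-1}}\|r(t,t\bar h+x_1)-r(t,t\bar h+x_2)\|$. Here no separate hypothesis of type $3^{\circ}$ must be assumed, since $f\in\mathcal{C}^{p+1}(X)$ with $f^{(k)}(x_0)=0$ for $k=1,\dots,p-1$ furnishes it through Taylor's formula: the term $\frac{1}{(p-1)!}f^{(p)}(x_0)[t\bar h]^{p-1}[\cdot]$ is precisely the derivative at $x=0$ of the $p$-homogeneous part $\frac{1}{p!}f^{(p)}(x_0)[t\bar h+\cdot]^{p}$, so after cancellation the difference of residuals is of higher order in $x_1-x_2$ and is bounded by a quantity of order $\delta\,t^{p-1}\|x_1-x_2\|$. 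The factor $t^{p-1}$ then cancels against $\frac{c}{t^{p-1}}$, leaving a Lipschitz constant $\theta$ that is made smaller than $1$ by taking the neighbourhood (and hence $\delta$) small enough.

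For the second CMP hypothesis I would use the kernel condition in place of the Banach condition. Since $\bar h\in\Ker^{p}L_h(0)$ and $N_C$ is a cone, $0\in L_{t\bar h}(0)$, hence $0\in L_{t\bar h}^{-1}(0)$; comparing $L_{t\bar h}^{-1}(0)$ with $\Phi_t(0)=L_{t\bar h}^{-1}(r(t,t\bar h))$ through \eqref{eq23} and cancelling the common factor term yields
$$\dist(0,\Phi_t(0))\leq \frac{c}{t^{p-1}}\,\dist\bigl(r(t,t\bar h),L_{t\bar h}(0)\bigr)=\frac{c}{t^{p-1}}\,\dist\bigl(-f(x_0+t\bar h),N_C(x_0+t\bar h)\bigr).$$
CMP then produces a fixed point $w(t\bar h)$ with $\|w(t\bar h)\|\leq \varepsilon_1/(1-\theta)$.

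The main obstacle is the growth estimate $\|w(t\bar h)\|=o(t)$, i.e.\ $\dist(0,\Phi_t(0))=o(t)$, which forces the right-hand side above to be $o(t^{p})$. This is where the kernel hypothesis does the real work: it gives $-\frac{1}{p!}f^{(p)}(x_0)[t\bar h]^{p}\in N_C(x_0+t\bar h)$, so after substituting $f(x_0+t\bar h)=f(x_0)+\frac{1}{p!}f^{(p)}(x_0)[t\bar h]^{p}+O(t^{p+1})$ the leading $p$-homogeneous part is absorbed by $N_C(x_0+t\bar h)$ and only the remainder $O(t^{p+1})$, together with the contribution of $f(x_0)$ coming from $0\in F(x_0)$, survives. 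Showing that this surviving part is genuinely $o(t^{p})$---in particular that the zeroth-order term $f(x_0)$ is compatible with $N_C(x_0+t\bar h)$ along the direction $\bar h$, uniformly in small $t$---is the delicate point, and is exactly what separates the tangent-cone statement from the parametric Theorem~\ref{th1}, where this role was played by the norm control $\|h(x)\|\leq c\|f(x,0)\|^{1/p}$ built into the Banach condition. Once this residual bound is secured, dividing by $t^{p-1}$ gives $\dist(0,\Phi_t(0))=o(t)$; since $\theta<1$ is independent of $t$ one may take $\varepsilon_1=o(t)$, so $\|w(t\bar h)\|=o(t)$ and \eqref{eq24} follows.
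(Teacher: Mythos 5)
Your construction coincides step for step with the paper's own proof: the same residual $r(t\bar h+x)=\frac{1}{(p-1)!}f^{(p)}(x_0)[t\bar h]^{p-1}[t\bar h+x]-f(x_0+t\bar h+x)$, the same auxiliary multimap $\Phi(x)=L_{t\bar h}^{-1}(r(t\bar h+x))$, the same use of the strong $p$-regularity bound \eqref{eq23} to obtain a contraction constant $\delta(t)\rightarrow 0$ (the paper, like you, extracts this from smoothness and complete degeneracy via Taylor's formula rather than from a separate hypothesis of type $3^{\circ}$), and the same reduction of $\bar h\in TM_F(x_0)$ to a fixed point of $\Phi$ via CMP. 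Up to that point there is nothing to object to.

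The difference is that you stop exactly where the proof must produce the quantitative bound, and that is a genuine gap in your write-up: you reduce everything to showing $\dist\bigl(-f(x_0+t\bar h),N_C(x_0+t\bar h)\bigr)=o(t^{p})$ and then label it ``the delicate point'' without proving it, so CMP cannot be invoked and the estimate $\|w(t\bar h)\|=o(t)$ --- which is the whole content of the conclusion --- is never established. The paper closes this step in its item 2$^{\circ}$ of the proof: into $N_C(x_0+t\bar h)$ it inserts the element supplied by the kernel hypothesis $\bar h\in\Ker^{p}L_h(0)$ (the factorial normalization is immaterial since $N_C$ is a cone, so any positive multiple of $-f^{(p)}(x_0)[t\bar h]^{p}$ lies in it), after which the $p$-homogeneous term cancels against the Taylor expansion of $f(x_0+t\bar h)$ and the remainder is of order $t^{p+1}$, giving $H(\Phi(0),0)\leq c_1t^{2}=o(t)$. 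Note, however, that this cancellation also silently disposes of the zeroth-order term: the paper's bound is valid as written only when $f(x_0)=0$ (as in its Example 3), whereas the hypothesis $0\in F(x_0)$ yields only $-f(x_0)\in N_C(x_0)$, which need not place $-f(x_0)$ in, or even near, $N_C(x_0+t\bar h)$. So the compatibility issue you isolated is real, but the paper does not treat it with any more care than you do --- it simply asserts the $c_1t^{2}$ estimate. If you grant that step (for instance under $f(x_0)=0$, or assuming $-f(x_0)\in N_C(x_0+t\bar h)$ for all small $t$, so that convexity of the cone lets the two normal-cone elements be added), your argument and the paper's are the same proof.
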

\begin{proof}
For the sake of simplicity we assume that $x_0=0$. Let us define

$ r(t\bar{h}+x):=\frac{1}{(p-1)!}f^{(p)}(0)[t\bar{h}]^{p-1}[t\bar{h}+x]-f(t\bar{h}+x)$.

Then
\begin{equation}\label{eq25}
  0\in L_{t\bar{h}}^{-1}\left(\frac{1}{(p-1)!}f^{(p)}(0)[t\bar{h}]^{p}+N_C(t\bar{h})\right).
\end{equation}
Consider the following mapping
\begin{equation}\label{eq26}
  \Phi(x):=L_{t\bar{h}}^{-1}(r(t\bar{h}+x)).
\end{equation}
We show that there exists $w(t\bar{h})$ such that
$w(t\bar{h})\in \Phi(w(t\bar{h}))$ and \\ $\|w(t\bar{h})\|=o(t)$
or, in other words
$
  0\in f(t\bar{h}+w(t\bar{h}))+N_C(t\bar{h}+w(t\bar{h}).
$
From the condition \eqref{eq23} for $\|y_1\|\leq \alpha t$, $\|y_2\|\leq \alpha t$ where $\alpha>0$ is sufficiently small we have
\begin{enumerate}
\item[1$^{\circ}$] $H(\Phi(y_1),\Phi(y_2))= H\left(L_{t\bar{h}}^{-1}\left(r(t\bar{h}+y_1)\right),L_{t\bar{h}}^{-1}\left(r(t\bar{h}+y_2)\right)\right)\leq$\\
    $\leq\frac{c}{t^{p-1}}\left\|\frac{1}{(p-1)!}f^{(p)}(0)[t\bar{h}]^{p-1}(y_1-y_2)-
      f(t\bar{h}+y_1)+f(t\bar{h}+y_2)\right\|\leq$\\
 $\leq \delta(t)\|y_1-y_2\|,$ where $\delta(t)\rightarrow 0$ while $t\rightarrow 0$.
 \vskip6pt

\item[2$^{\circ}$] $H(\Phi(0),0)\leq$\\
$\leq H\left(L_{t\bar{h}}^{-1}\left(r(t\bar{h})\right),
     L_{t\bar{h}}^{-1}\left(\frac{1}{(p-1)!}f^{(p)}(0)[t\bar{h}]^{p}+N_C(t\bar{h})\right)\right)\leq$\\
     $\leq\frac{c}{t^{p-1}}\left\|r(t\bar{h})-\frac{1}{(p-1)!}f^{(p)}(0)[t\bar{h}]^{p}+N_C(t\bar{h})\right\|\leq$\\
      $\leq\frac{c}{t^{p-1}}\left\|\frac{1}{(p-1)!}f^{(p)}(0)[t\bar{h}]^{p}-
    f(t\bar{h})+0\right\|\leq c_1 t^2.$
\end{enumerate}

It means that all conditions of CMP are fulfilled and hence
there exists $w(t\bar{h})\in \Phi(t\bar{h})$ and it follows that $w(t\bar{h})\in L_{t\bar{h}}^{-1}\left(r(t\bar{h}+w(t\bar{h}))\right)$,
or in other words
$ 0\in f(t\bar{h}+w(t\bar{h}))+N_C(t\bar{h}+w(t\bar{h})$ $\forall t\in [0,\varepsilon)$ and $\|w(t\bar{h})\|=o(t)$, i.e.
$\bar{h}\in TM_F(x_0)$.
\end{proof}

\vskip3pt

\noindent
\textbf{Example 3.}

Let $F(x)=f(x)+N_C(x)$, where $f(x)=(f_1(x),f_2(x))^T$, $x\in \mathbb{R}^2$,\\
 $f_1(x)=x_2^2-x_1^2$, $f_2(x)=x_1 x_2$, $x\geq 0$, $C=\mathbb{R}_+^2$, $x_0=0$. Here $p=2$,
 $N_{\mathbb{R}_+^2}(0)=\left\{z\in \mathbb{R}^2: \langle z,\xi \rangle\leq 0, \forall \xi\in \mathbb{R}_+^2 \right\}$
 and generalized equation is
 $$0\in \left(
                \begin{array}{c}
                  x_2^2-x_1^2 \\
                  x_1 x_2\\
                \end{array}
              \right)+N_{\mathbb{R}_+^2}\left(
                \begin{array}{c}
                  x_1 \\
                  x_2\\
                \end{array}
              \right). $$
 Taking $\bar{h}=(\bar{h}_{x_1},\bar{h}_{x_2})^T$ where $\bar{h}_{x_1}=0$ $\bar{h}_{x_2}=1$ we have
 $N_{\mathbb{R}_+^2}\left(
                \begin{smallmatrix}
                  0 \\
                  1\\
                \end{smallmatrix}
              \right)= \alpha \left(
                \begin{array}{c}
                  -1 \\
                  0\\
                \end{array}
              \right)$, $\alpha>0$ and $\bar{h}\in \Ker^2L_h(0)$ if
$\left(
                \begin{array}{c}
                  \bar{h}_{x_1} \\
                  0\\
                \end{array}
              \right)-  \left(
                \begin{array}{c}
                 \alpha\\
                  0\\
                \end{array}
              \right)=0$, i.e. $\bar{h}_{x_1}^2=\alpha$ and $\bar{h}_{x_1}=\pm\sqrt{\alpha}$ and
              all assumptions of Theorem \ref{th3} are fulfilled. It means that $\bar{h}=(0,1)^T\in TM_F(0).$

% Non-BibTeX users please use

\end{document}